\newtheorem{lem}{Lemma}
\newtheorem{cor}[lem]{Corollary}
\newtheorem{thm}[lem]{Theorem}
\numberwithin{equation}{section}
\newcommand{\bfi}{\begin{figure} \begin{center}}
\newcommand{\efi}{\end{center} \end{figure}}
\newcommand{\capt}{\caption}
\newcommand{\hs}[1]{\hspace{#1}}
\newcommand{\da}{\hs{-2pt}\downarrow}
\newcommand{\lf}{\lfloor}
\newcommand{\rf}{\rfloor}
\providecommand{\scr}{\mathcal}
\author{Kenneth Barrese}
\title{Weighted File Placements on Singleton Boards}
\begin{document}
\maketitle

\begin{abstract}

In 2006, Briggs and Remmel gave a factorization theorem for $m$-level rook placements on singleton boards, a special subset of Ferrers boards. Subsequently, Barrese, Loehr, Remmel, and Sagan defined the $m$-weighted file placements to give a combinatorial interpretation to the aforementioned factorization theorem for all Ferrers boards. An unintended consequence of this definition is that the sum of the $m$-weights of file placements that are not $m$-level rook placements on a singleton board must be zero. In this paper, we attempt to illuminate this result by partitioning the set of file placements that are not $m$-level rook placements. We do so in such a way that it can be shown constructively that the sum of the $m$-weights on each partition must be zero using induction.

\end{abstract}

%


\section{Introduction}
\label{sec:in}

\subsection{History of Rook Theory}

In order to place this work in its historical context, we begin by tracing the relevant portion of rook theory through history. The field of rook theory, as a formal, academic subject, originated with a 1946 paper by Kaplansky and Riordan~\cite{kr:pra}. In 1975, Goldman, Joichi, and White were able to factor rook polynomials, the generating functions for rook numbers, of Ferrers boards by redefining them to be over the falling factorial basis of polynomials~\cite{gjw:rtI}.

Briggs and Remmel defined a generalization of rook placements, called $m$-level rook placements, in 2006~\cite{br:mrn}. They also provided a factorization of the $m$-level rook polynomial for a subset of Ferrers boards now called singleton boards. To factor $m$-level rook polynomials of all Ferrers boards, we were forced to abandon the elegant factorization which only depended on column heights, first introduced by Goldman, Joichi, and White. In a 2013 paper, Barrese, Loehr, Remmel, and Sagan factored $m$-level rook polynomials for all Ferrers boards~\cite{blrs:mrp}. The paper also introduced $m$-weighted file placements to give meaning to the elegant factorization that no longer yielded the $m$-level rook polynomial for non-singleton Ferrers boards.

Because the $m$-level rook polynomial and the $m$-weighted file placement polynomial agree on singleton boards, it turns out that the sum of the $m$-weighted file placements must be zero over all file placements that are not $m$-level rook placements. While this result is a consequence of the two factorizations yielding the same polynomial for singleton boards, no constructive explanation was produced for this phenomenon. The purpose of this paper is to explain why this is so, utilizing a new partition of the file placements that are not $m$-level rook placements on a singleton board. The new partition is constructed in such a way that the sum of the $m$-weighted file placements on each set in the partition will be zero, illuminating why the sum over the entire set of file placements that are not $m$-level rook placements must also be zero.

\subsection{Organization}

Section~\ref{sec:in} gives a brief overview of the history of $m$-level rook placements and lays out the goal of this paper before summarizing the structure of this paper. In Section~\ref{sec:rp} we lay out the formal definitions of classical rook placements, in the sense of Kaplansky and Riordan, and $m$-level rook placements, introduced by Briggs and Remmel. Along the way we present the original factorization theorem of Goldman, Joichi, and White, the factorization of $m$-level rook polynomials for singleton boards of Briggs and Remmel, and two formulations of the factorization theorem of $m$-level rook polynomials for general Ferrers boards by Barrese, Loehr, Remmel, and Sagan. The second of which was introduced in their 2016 paper~\cite{blrs:bom}.

Section~\ref{sec:wfp} introduces $m$-weighted file placements, including their motivation as a combinatorial object. The second half of the section formally describes why the sum of $m$-weighted file placements that are not $m$-level rook placements on a singleton board must be zero. In the penultimate section, Section~\ref{sec:par}, we develop the partition used to demonstrate why $m$-weighted file placements will sum to zero in the context in question. The section concludes with the theorem that the $m$-weighted file placements sum to zero on each individual set in that partition. Section~\ref{sec:open} presents three open projects that would be ideal considerations for mathematicians interested in taking the research presented in this paper further.

\section{Rook Placements}
\label{sec:rp}

\subsection{Ordinary Rook Placements}

Given a positive integer $n$, let $Sq_n$ denote an $n \times n$ array of square cells. A \emph{board}, $B$, is a finite subset of the cells of $Sq_n$ for some finite value of $n$. A \emph{rook placement of $k$ rooks} on $B$ is a subset of $B$ containing $k$ cells, such that no two cells are in the same row or column. Sometimes such a placement is called a \emph{non-attacking rook placement}, because if the rook chess piece were placed in each of the selected cells, no two would be able to attack each other under the standard rules of chess. Figure~\ref{exPlacement} gives an example of a rook placement of four rooks on $Sq_4$.

The \emph{$k$th rook number} of $B$, denoted $r_k(B)$, is the number of rook placements of $k$ rooks that exist on $B$. For example, if $B = Sq_4$ then $r_4(B) = 24 = 4!$. Consider that there are $4$ cells to place a rook in the first column, then three non-attacked squares in the second column, and so forth. The rook placement in Figure~\ref{exPlacement} is one of the $24$ rook placements of four rooks on $Sq_4$.

\bfi
\begin{tikzpicture}
\draw(-.5,1) node {$Sq_4=$};
\foreach \x in {0,.5,1,1.5,2}
   \draw (\x,0)--(\x,2);
\foreach \y in {0,.5,1,1.5,2}
	\draw (0,\y)--(2,\y);
\draw (.75, .75) node{$R$};
\draw (.25, 1.75) node{$R$};
\draw (1.25, .25) node{$R$};
\draw (1.75, 1.25) node{$R$};
\end{tikzpicture}
\capt{\label{exPlacement} A non-attacking rook placement of four rooks on $Sq_4$.}
\efi

For the rest of this paper we will consider a specific subset of boards with nice properties, called Ferrers boards. Given a non-negative integer partition $0 \leq b_1 \leq b_2 \leq \ldots \leq b_n$ the \emph{Ferrers board} $B=(b_1,b_2,\ldots,b_n)$ consists of the bottom $b_i$ cells in the $i$th column. Figure~\ref{exFerrers} gives an example of a board which is a Ferrers board and, for contrast, another board which is not. The main advantage of Ferrers boards is that their rook polynomials factor nicely.

\bfi
\begin{tikzpicture}
\draw(-.5,1) node {$B_1=$};
\foreach \x in {0,.5} 
   \draw (\x,0)--(\x,.5);
\foreach \x in {1} 
   \draw (\x,0)--(\x,1);
\foreach \x in {1.5,2}
   \draw (\x,0)--(\x,2);
\foreach \y in {0,.5}
	\draw (0,\y)--(2,\y);
\foreach \y in {1}
        \draw (1,\y)--(2,\y);
\foreach \y in {1.5,2}
          \draw(1.5,\y)--(2,\y);
\end{tikzpicture}
\hs{40pt}
\begin{tikzpicture}
\draw(-.5,1) node {$B_2=$};
\foreach \x in {0,.5} 
   \draw (\x,0)--(\x,1);
\foreach \x in {1} 
   \draw (\x,0)--(\x,1);
\foreach \x in {1.5,2}
   \draw (\x,0)--(\x,2);
\foreach \y in {0,.5}
	\draw (0,\y)--(2,\y);
\foreach \y in {1}
        \draw (1,\y)--(2,\y);
\foreach \y in {1.5,2}
          \draw(1.5,\y)--(2,\y);
\draw (0,1)--(.5,1);
\end{tikzpicture}
\capt{\label{exFerrers} The board on the left is the Ferrers board $B_1= (1,1,2,4)$. The board on the right is not a Ferrers board.}
\efi

Given a Ferrers board $B$, the \emph{rook polynomial}, as defined by Goldman, Joichi, and White~\cite{gjw:rtI}, of $B$ is $$p(B,x) = \sum_{k=0}^n r_k(B)x \da_{n-k},$$ where $n\da_k$ is called the \emph{$k$th falling factorial of n} and is defined by $n\da_k = \prod_{i=0}^{k-1} n-i$. Defining the rook polynomial using the falling factorials allowed Goldman, Joichi, and White to factor the rook polynomials of Ferrers boards, with roots related to the board's column heights, as follows.

\begin{thm}[Goldman-Joichi-White~\cite{gjw:rtI}]
\label{gjw}
If $B=(b_1,\dots,b_n)$ is a Ferrers board, then
$$
p(B,x) = \prod_{i=1}^n (x+b_i-(i-1)).
$$
\end{thm}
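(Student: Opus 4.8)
The plan is to establish the identity as a polynomial identity in $x$ by verifying it at all non-negative integers, which suffices because both sides are polynomials of degree $n$ in $x$ and agreement at infinitely many points forces equality. To this end I would fix a non-negative integer $x$ and form the augmented board $B^x=(b_1+x,\dots,b_n+x)$, obtained by appending $x$ full rows of width $n$ beneath $B$. The heart of the argument is to count the placements of $n$ non-attacking rooks on $B^x$ in two different ways; since $B^x$ has exactly $n$ columns, every such placement uses exactly one rook per column.

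For the product side, I would place the rooks one column at a time, processing the columns in order of increasing height, $i=1,2,\dots,n$. When the rook in column $i$ is placed, rooks have already been committed to columns $1,\dots,i-1$. Here the Ferrers structure is essential: because the column heights are weakly increasing and the cells are bottom-justified, every row occupied by one of those $i-1$ earlier rooks also belongs to column $i$. Hence exactly $i-1$ of the $b_i+x$ cells of column $i$ are forbidden, leaving $(b_i+x)-(i-1)$ admissible choices. Multiplying over all columns yields exactly $\prod_{i=1}^n\bigl(x+b_i-(i-1)\bigr)$ placements.

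For the sum side, I would instead classify placements by the number $k$ of rooks lying in the original board $B$ (the top portion). There are $r_k(B)$ ways to position these $k$ rooks, after which exactly $n-k$ columns remain empty and must receive their rooks inside the appended block of $x$ rows. Since that block is a full $n\times x$ rectangle, the $n-k$ remaining rooks occupy the $n-k$ vacant columns and must be assigned to distinct rows among the $x$ available ones, so the number of such assignments is the falling factorial $x\da_{n-k}$. Summing over $k$ gives $\sum_{k=0}^n r_k(B)\,x\da_{n-k}=p(B,x)$, and equating the two counts completes the proof.

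I expect the main obstacle to be the careful justification in the product count that precisely $i-1$ cells of column $i$ are blocked by the previously placed rooks; this is exactly the step that fails for non-Ferrers boards and that makes the bottom-justified, weakly increasing structure indispensable. A secondary point requiring care is confirming that rooks in $B$ never attack rooks in the appended rows, since the two groups lie in disjoint sets of rows and, by construction, in distinct columns, together with the standard but necessary remark that a polynomial identity valid at infinitely many integers holds identically.
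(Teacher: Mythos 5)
Your argument is correct: it is precisely the classical two-way count of full rook placements on the augmented board $B^x$ due to Goldman, Joichi, and White, which is the proof in the reference~\cite{gjw:rtI} that this paper cites for Theorem~\ref{gjw} (the paper itself states the theorem without reproducing a proof). Both the product count (exactly $i-1$ rows of column $i$ blocked, by the bottom-justified, weakly increasing structure), the sum count ($r_k(B)\,x\da_{n-k}$ via the $n\times x$ appended rectangle), and the polynomial-identity conclusion from agreement at infinitely many integers are sound as written.
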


Continuing the example illustrated on the left side of Figure~\ref{exFerrers}, $B_1 = (1,1,2,4)$. This means that $p(B_1,x)=(x+1-0)\cdot(x+1-1)\cdot(x+2-2)\cdot(x+4-3)=x^2(x+1)^2=x^4+2x^3+x^2$. Two boards are called \emph{rook equivalent} if they have the same rook numbers for all values of $k\geq 0$. If the two Ferrers boards have the same number of columns, the boards are rook equivalent if and only if they have the same rook polynomials. Therefore Goldman, Joichi, and White's factorization theorem provides a quick test to determine whether two Ferrers boards are rook equivalent or not based only on their column heights.

\subsection{$m$-Level Rook Placements}

Briggs and Remmel defined a generalization of rook placements called $m$-level rook placements~\cite{br:mrn}. Given positive integers $m$ and $n$, let $Sq_{n,m}$ denote an $mn \times n$ array of square cells. We will partition the rows of $Sq_{n,m}$ into sets of size $m$ called \emph{levels}, where the first level contains the bottom $m$ rows, the second level contains rows $m+1$ through $2m$, all the way up to the $n$th level. For the purposes of $m$-level rook placements, we will consider a board to be a finite subset of the cells of $Sq_{n,m}$.

An \emph{$m$-level rook placement of $k$ rooks} on $B$ is a subset of $B$ containing $k$ cells, no two of which are in the same level or column. Notice that an $m$-level rook placement replaces the role of a row with that of a level, a set of rows. Clearly then every $m$-level rook placement is also a rook placement, and a rook placement (an ordinary rook placement) is equivalent to a $1$-level rook placement. The \emph{$k$th $m$-level rook number} of $B$, denoted $r_{k,m}(B)$, is the number of $m$-level rook placements of $k$ rooks on $B$.

\bfi
\begin{tikzpicture}
\draw(-.5,1) node {$B=$};
\foreach \x in {0} 
   \draw (\x,0)--(\x,.5);
\foreach \x in {.5,1} 
   \draw (\x,0)--(\x,1);
\foreach \x in {1.5,2}
   \draw (\x,0)--(\x,1.5);
\foreach \y in {0,.5}
	\draw (0,\y)--(2,\y);
\foreach \y in {1}
        \draw (.5,\y)--(2,\y);
\foreach \y in {1.5}
          \draw(1.5,\y)--(2,\y);
\end{tikzpicture}
\capt{\label{nonsing} For $m=3$ the board $B= (1,2,2,3)$ is not a singleton board, because the first three columns all intersect the first level without filling it. However, if $m=2$, then $B$ is a singleton board since only the first column and fourth column intersect levels without filling them, and they each leave a different level incomplete.}
\efi

Many theorems regarding $m$-level rook placements are easiest to prove for a nice subset of Ferrers boards, called singleton boards. To define a singleton board we use the concept of the \emph{$m$-floor of $n$}, denoted $\lfloor n \rfloor_m$, the largest multiple of $m$ less than or equal to $n$. A Ferrers board $B = (b_1, b_2,\ldots,b_n)$ is a \emph{singleton board} if $b_i -\lfloor b_i \rfloor_m \neq 0$ implies that $\lfloor b_i \rfloor_m < \lfloor b_{i+1} \rfloor_m$. The descriptor, ``singleton,'' arises because this is equivalent to requiring that for each level, there is at most a single column of $B$ that intersects that level in more than one cell but less than $m$ cells. Note that the set of singleton boards does depend on the value of $m$ being considered, and further that when $m=1$, the set of singleton boards is equal to the set of Ferrers boards. For $m\geq2$, singleton boards are a proper subset of Ferrers boards. See Figure~\ref{nonsing} for an example of a Ferrers board that is singleton for $m=2$ but is not singleton for $m=3$.

One theorem which is more straightforward for singleton boards involves factoring the $m$-level rook polynomial of a singleton board $B$. In order to define the $m$-level rook polynomial, first we need the $m$-falling factorial. Similarly to the falling factorial, the $m$-falling factorial is defined by: $n\da_{k,m}=\prod_{i=0}^{k-1}n-mi$. Then, given a Ferrers board $B$, the \emph{$m$-level rook polynomial} of $B$ is $$p_m(B,x)= \sum_{k=0}^n r_{k,m}(B)x\da_{n-k,m.}$$ As before, if two Ferrers boards with the same number of columns have the same $m$-level rook polynomial, we say that the boards are $m$-level rook equivalent. Briggs and Remmel gave the following factorization theorem for the $m$-level rook polynomial of a singleton board.

\begin{thm}[Briggs-Remmel~\cite{br:mrn}]
\label{br factor}
If $B=(b_1,\ldots,b_n)$ is a singleton board, then
$$
p_m(B,x) = \prod_{i=1}^n(x+b_i-m(i-1)).
$$
\end{thm}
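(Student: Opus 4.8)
The plan is to mimic the Goldman--Joichi--White argument (Theorem~\ref{gjw}) in the $m$-level setting. Both sides of the claimed identity are monic polynomials of degree $n$ in $x$: the right-hand side is a product of $n$ monic linear factors, while on the left the $k=0$ term contributes $r_{0,m}(B)\,x\da_{n,m}$ with $r_{0,m}(B)=1$ and every other term has degree $n-k<n$. Hence it suffices to show the two polynomials agree at infinitely many values of $x$, and I would evaluate both at $x=mt$ for every sufficiently large non-negative integer $t$. The engine of the proof is a single combinatorial quantity: the number of full $m$-level placements (one rook in every column) on the augmented board $B^{(t)}$ obtained by adjoining $t$ complete levels, i.e.\ $mt$ full rows, beneath $B$. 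I will count these placements two ways, once to recover the left-hand side and once to recover the right-hand side.

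For the left-hand side, note that $(mt)\da_{n-k,m}=\prod_{i=0}^{n-k-1}m(t-i)=m^{n-k}\,t\da_{n-k}$. I would sort the full placements on $B^{(t)}$ by the number $k$ of rooks landing in the original board $B$, the remaining $n-k$ rooks lying in the $t$ appended full levels. The rooks in $B$ form a $k$-rook $m$-level placement, counted by $r_{k,m}(B)$; they occupy $k$ columns and $k$ of the upper levels. Each of the other $n-k$ rooks occupies one of the remaining columns at a distinct level among the $t$ appended levels, and since those levels are complete there are $m$ choices of row within each chosen level, giving $m^{n-k}\,t\da_{n-k}$ placements of the lower rooks. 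Summing over $k$ yields $\sum_{k}r_{k,m}(B)\,(mt)\da_{n-k,m}=p_m(B,mt)$, the left-hand side.

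For the right-hand side I would place the rooks column by column, in order of increasing height (that is, columns $1,2,\dots,n$). When it is column $i$'s turn, $i-1$ rooks already sit at $i-1$ distinct levels, and the number of legal cells remaining is $mt+b_i$ minus the number of cells of column $i$ lying in those $i-1$ occupied levels. The crux is to show this deduction is exactly $m(i-1)$, that is, that every previously placed rook sits at a level \emph{completely} contained in column $i$ and hence blocks a full $m$ cells. A rook in an earlier column $j<i$ can fail to do this only if it occupies a level meeting column $i$ in fewer than $m$ cells, which forces column $j$ to reach above $\lfloor b_i\rfloor_m$ while $b_j\le b_i$; this happens only when $\lfloor b_j\rfloor_m=\lfloor b_i\rfloor_m$ and $b_j-\lfloor b_j\rfloor_m\neq0$. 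Precisely here the singleton hypothesis intervenes: since $b_j$ has a nonzero partial part it demands $\lfloor b_j\rfloor_m<\lfloor b_{j+1}\rfloor_m\le\lfloor b_i\rfloor_m$, contradicting $\lfloor b_j\rfloor_m=\lfloor b_i\rfloor_m$. Thus each earlier rook blocks exactly $m$ cells, column $i$ offers $mt+b_i-m(i-1)$ choices, and multiplying over $i$ gives $\prod_{i=1}^n\bigl(mt+b_i-m(i-1)\bigr)$, the right-hand side evaluated at $x=mt$.

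Matching the two counts shows the two polynomials agree at $x=mt$ for all large $t$, hence identically, which is the theorem. I expect the only real difficulty to be the third step, the column-by-column bookkeeping at partial levels; everything else is a routine transcription of the ordinary rook argument. It is worth flagging that this is the unique place the singleton hypothesis is used, and that the argument visibly breaks for a non-singleton board --- two partial columns sharing an $m$-floor would let an earlier rook block fewer than $m$ cells --- which is consistent with the factorization failing there.
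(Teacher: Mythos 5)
Your proof is correct: the paper states Theorem~\ref{br factor} as a cited result of Briggs and Remmel without proving it, and your argument --- extending $B$ by $t$ full levels below, counting full $m$-level placements on the augmented board once by the number of rooks landing in $B$ (giving $\sum_k r_{k,m}(B)\,(mt)\da_{n-k,m}$) and once column by column (giving $\prod_i (mt+b_i-m(i-1))$, with the singleton hypothesis guaranteeing each earlier rook blocks exactly $m$ cells), then comparing polynomials at infinitely many values $x=mt$ --- is essentially the original Goldman--Joichi--White extension argument that Briggs and Remmel adapted to the $m$-level setting. Your identification of exactly where singletonness is needed (ruling out an earlier partial column $j<i$ with $\lfloor b_j\rfloor_m=\lfloor b_i\rfloor_m$ and $b_j-\lfloor b_j\rfloor_m\neq 0$, via $\lfloor b_j\rfloor_m<\lfloor b_{j+1}\rfloor_m\le\lfloor b_i\rfloor_m$) is precisely the right crux and matches why the factorization fails for non-singleton Ferrers boards.
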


Notice that Theorem~\ref{gjw} is a special case of Theorem~\ref{br factor} when $m=1$.

This factorization was extended to all Ferrers boards by Barrese, Loehr, Remmel, and Sagan in a couple of ways. The first requires the definition of a \emph{zone}, $z = [s,t]$, which is a maximal range of column indices of $B$ such that $\lfloor b_s \rfloor_m = \lfloor b_{s+1} \rfloor_m = \ldots = \lfloor b_t \rfloor_m$. The \emph{remainder} of column $i$ is $\rho_i = b_i - \lfloor b_i \rfloor_m$, and the remainder of zone $z$ is $\rho_z = \sum_{k=s}^t \rho_k$. These definitions set up the following theorem.

\begin{thm}[\cite{blrs:mrp}]
\label{zone factor}
If $B=(b_1,\ldots,b_n)$ is a Ferrers board, then
$$
p_m(B,x) = \prod_{i=1}^n 
\begin{cases}
x+\lf b_j \rf_m -(i-1)m + \rho_z &\text{if $i$ is the last index in its zone $z$,}\\
x+\lf b_j \rf_m -(i-1)m &\text{otherwise.}
\end{cases}
$$
\end{thm}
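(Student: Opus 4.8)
The plan is to prove the identity by interpreting both sides as counts of the same family of placements on an enlarged board, in the spirit of the Goldman--Joichi--White argument. Since both sides are polynomials in $x$, it suffices to verify equality at the infinitely many values $x = Nm$ for large integers $N$. Fix such an $N$ and let $B^+ = (b_1 + Nm, \ldots, b_n + Nm)$ be the board obtained from $B$ by appending $N$ full levels (that is, $Nm$ rows) beneath every column. I will count the \emph{full} $m$-level rook placements on $B^+$, meaning those with exactly one rook in each of the $n$ columns and no two rooks in a common level, and extract the two sides from two different counting schemes.

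First I would obtain the left-hand side. Grouping these placements by the number $k$ of rooks that lie in the original region $B$ (the top part), those $k$ rooks form an $m$-level rook placement of $k$ rooks on $B$, contributing a factor $r_{k,m}(B)$, while the remaining $n-k$ rooks occupy distinct levels among the $N$ appended full levels, in the $n-k$ columns left over. Since the appended levels are full and disjoint from the levels met by $B$, placing those $n-k$ rooks contributes $\prod_{j=0}^{n-k-1}(Nm - jm) = x\da_{n-k,m}$. Summing over $k$ gives exactly $\sum_{k} r_{k,m}(B)\, x\da_{n-k,m} = p_m(B,x)$.

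Next I would build the same placements zone by zone, from left to right, to recover the product. Writing $b_i = \lf b_i \rf_m + \rho_i$, every column in a zone $z=[s,t]$ with common floor $qm$ fills levels $1,\dots,N+q$ completely and meets the single partial level $N+q+1$ in $\rho_i$ cells, with $\rho_s \le \cdots \le \rho_t$ because $B$ is Ferrers. The key bookkeeping observation is that columns in earlier zones are strictly shorter, so each of the $s-1$ rooks already placed occupies a level that is \emph{full} for the columns of $z$; hence, regardless of which levels they used, exactly $A := N+q-(s-1)$ full levels remain available to $z$, together with the one partial level. Placing the $w := t-s+1$ rooks of the zone then splits into two cases: if none uses the partial level the count is $m^{w} A\da_{w}$, and if exactly one does (it must, since only one rook fits in a level) we choose its column $i$, contributing $\rho_i$, and place the others in full levels, giving $\big(\sum_{i=s}^t \rho_i\big) m^{w-1} A\da_{w-1} = \rho_z\, m^{w-1}A\da_{w-1}$. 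Adding these and using $A\da_{w}=A\da_{w-1}(A-w+1)$ yields $m^{w-1}A\da_{w-1}\big(m(A-w+1)+\rho_z\big)$, which is precisely $\prod_{i=s}^{t-1}\big(x+\lf b_i\rf_m-(i-1)m\big)\cdot\big(x+\lf b_t\rf_m-(t-1)m+\rho_z\big)$ once one checks $m(A-(i-s)) = x+\lf b_i\rf_m-(i-1)m$. Taking the product over all zones reproduces the right-hand side, and equating the two counts finishes the proof.

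The step I expect to be the crux is the independence just used: I must argue that the number of ways to extend a placement across a zone depends only on the \emph{number} $A$ of available full levels and not on which particular levels were consumed earlier, and that the remainders enter solely through their zone sum $\rho_z$ --- this is exactly the combinatorial mechanism by which all of a zone's remainder collapses onto its last column. Establishing this cleanly requires the monotonicity of the $b_i$ together with the fact that consecutive zones have strictly increasing floors, so that a rook placed in a zone's partial level simply becomes one more blocked full level for every later zone. As a consistency check, when $B$ is singleton each zone has at most one column with $\rho_i>0$, forcing $\rho_z=\rho_t$ and collapsing the formula to Theorem~\ref{br factor}.
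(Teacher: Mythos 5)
Your argument is correct, and it is worth noting that the paper you were asked to compare against does not actually prove Theorem~\ref{zone factor} --- it is quoted from \cite{blrs:mrp} without proof --- so your write-up is a genuine reconstruction rather than a paraphrase. The proof you give is the natural $m$-level extension of the Goldman--Joichi--White counting argument: append $N$ full levels, count maximal $m$-level placements on the enlarged board two ways, and invoke the polynomial identity at $x=Nm$. All the delicate points check out. The left-hand count gives $\sum_k r_{k,m}(B)\,x\da_{n-k,m}$ because the appended $Nm$ rows form exactly $N$ whole levels, so the level structure of $B^+$ restricts correctly to $B$. On the right, your key bookkeeping is sound: since consecutive zones have strictly increasing $m$-floors, every column left of zone $z=[s,t]$ has height at most $Nm+qm-1$ in $B^+$, so the $s-1$ earlier rooks sit in $s-1$ distinct levels among the $N+q$ levels that are full for $z$ and can never occupy $z$'s partial level; hence exactly $A=N+q-(s-1)$ full levels remain regardless of history, and the case split (no rook in the partial level versus exactly one, weighted by $\rho_i$ and summed to $\rho_z$) yields $m^{w-1}A\da_{w-1}\bigl(m(A-w+1)+\rho_z\bigr)$, which matches the zone's block of factors after the substitution $m(A-(i-s))=x+\lf b_i\rf_m-(i-1)m$. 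One cosmetic remark: your phrase ``columns in earlier zones are strictly shorter'' should really be ``have strictly smaller $m$-floors,'' which is the property you actually use (and do state later); and you silently correct the paper's typo $\lf b_j\rf_m$ to $\lf b_i\rf_m$, which is the intended reading. Your consistency check against Theorem~\ref{br factor} in the singleton case is also right.
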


Another generalization to all Ferrers boards comes from letting $l_j$ be the \emph{$j$-th level number} of $B$, or the number of cells of $B$ in the $j$th level from the top, that is, level $n+1-j$ since $B$ consists of $n$ levels in total. Using this definition yields another factorization theorem.

\begin{thm}[\cite{blrs:bom}]
\label{level factor}
If $B=(b_1,\ldots,b_n)$ is a Ferrers board, then
$$
p_m(B,x) = \prod_{j=1}^n (x+l_j-m(j-1)).
$$
\end{thm}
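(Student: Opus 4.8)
The plan is to prove this the way Goldman, Joichi, and White proved Theorem~\ref{gjw}: augment the board, count maximal placements in two ways, and read off a polynomial identity. Fix a positive integer $x$ and form the augmented board $\bar B$ by appending to $B$ exactly $x$ extra columns of full height $mn$. Then $\bar B$ has the same $n$ levels as $B$ but $n+x$ columns, so a \emph{full} $m$-level placement uses exactly $n$ rooks, one in each level, in $n$ distinct columns. I will count these full placements twice, once by levels and once by splitting off the appended columns.

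For the first count I would process the levels from the top downward. The crucial observation—and the reason the level numbers $l_j$ read from the top appear in place of the column heights—is a uniformity lemma: a rook sitting in a given level forces its column to have height at least enough to reach that level, and hence that column contains all $m$ cells of every level strictly below it. Therefore, when we choose the rook in the $j$th level from the top, each of the $j-1$ columns already occupied by the higher levels deletes exactly $m$ cells from the current level, independently of which columns they are. Since the $j$th level from the top contains $l_j+mx$ cells in all ($l_j$ from $B$ and $mx$ from the appended full columns), the number of admissible cells is $l_j+mx-m(j-1)$, and multiplying over the $n$ levels gives the first count
$$
\prod_{j=1}^n\bigl(mx+l_j-m(j-1)\bigr).
$$

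For the second count I would classify each full placement by the number $k$ of its rooks lying in the original columns of $B$. Those $k$ rooks form an $m$-level rook placement of $k$ rooks on $B$, contributing $r_{k,m}(B)$, and they occupy some $k$ levels; the remaining $n-k$ rooks must lie in the appended full columns, one in each of the $n-k$ leftover levels. Placing them is an independent choice: assign distinct appended columns to the $n-k$ free levels in $x\da_{n-k,1}$ ways and pick a row within each level in $m^{n-k}$ ways. Because $m^{n-k}\,x\da_{n-k,1}=\prod_{i=0}^{n-k-1}(mx-mi)=(mx)\da_{n-k,m}$, summing over $k$ yields $\sum_{k} r_{k,m}(B)\,(mx)\da_{n-k,m}=p_m(B,mx)$.

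Equating the two counts gives $\prod_{j=1}^n(mx+l_j-m(j-1))=p_m(B,mx)$ for every positive integer $x$. Writing $F(y)=\prod_{j=1}^n(y+l_j-m(j-1))$ and $G(y)=p_m(B,y)$, this is $F(mx)=G(mx)$ at the infinitely many points $y=m,2m,3m,\dots$; two polynomials that agree at infinitely many points are identical, so $p_m(B,y)=\prod_{j=1}^n(y+l_j-m(j-1))$, which is the claim (and reduces to Theorem~\ref{gjw} when $m=1$). I expect the main obstacle to be pinning down and justifying the uniformity lemma behind the first count—verifying that a rook in a higher level forces its column to be full in every level beneath it, which is precisely what makes the per-level count independent of the placement and dictates reading the level numbers from the top. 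A tidier-looking but more computational alternative would be to assume Theorem~\ref{zone factor} and instead check directly that the multiset of roots $\{m(j-1)-l_j\}$ matches the multiset produced by the zone factorization; I would avoid that route, since matching those roots zone by zone appears no simpler than the direct double count.
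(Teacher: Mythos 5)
The paper does not actually prove Theorem~\ref{level factor}; it is quoted from \cite{blrs:bom} without argument, so there is no internal proof to compare against. Your proposal, read on its own, is a correct and complete derivation. The one claim everything hinges on --- that a rook in a given level forces its column to contain all $m$ cells of every strictly lower level --- is genuinely true for the boards in this paper: a rook of column $i$ in level $L$ forces $b_i\ge m(L-1)+1$, and since each column consists of its bottom $b_i$ cells, the column covers rows $1$ through $m(L-1)$ and hence every lower level in full. (Note this uses only bottom-justification of columns, not the weakly increasing heights, so the top-down level count is exact and independent of which columns were chosen.) The second count is also right: each of the $x$ appended full columns meets every level in $m$ cells, so the $n-k$ rooks off of $B$ can be placed in $m^{n-k}x\da_{n-k}=(mx)\da_{n-k,m}$ ways regardless of which levels remain free, giving $p_m(B,mx)$, and the polynomial identity at $y=mx$ for infinitely many $x$ finishes it. Compared with the route the paper gestures at (deriving the level form from the zone factorization of Theorem~\ref{zone factor} by matching multisets of roots), your double count is self-contained, explains \emph{why} the level numbers are read from the top (that is where the per-level deletion count is uniform), and specializes transparently to Theorem~\ref{gjw} at $m=1$; the only cost is carrying the augmented-board bookkeeping that the citation-based route avoids.
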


While Theorem~\ref{level factor} is more concisely written than Theorem~\ref{zone factor}, in both cases you will get the same set of factors, because $p_m(B,x)$ remains the same. And, in either case, some information is lost for non-singleton boards, as they cannot be uniquely determined by the roots of their factorization. This suggests the open question, given an ordered set of level numbers: $(l_1,l_2,\ldots,l_n)$, how many unique Ferrers boards have those specific level numbers.

\section{Weighted File Placements}
\label{sec:wfp}

\subsection{Motivation}

As noted in Theorem~\ref{br factor}, $\prod_{i=1}^n(x+b_i-m(i-1))$ only gives a factorization of the $m$-level rook polynomial if the board in question is a singleton board. As such, Barrese, Loehr, Remmel, and Sagan defined weighted file placements to give combinatorial meaning to the product from Theorem~\ref{br factor} for a general Ferrers board. A \emph{file placement} on board $B$ is a subset, $F$, of the cells of $B$ such that no two are in the same column. Note that, unlike a rook placement, any number of cells are allowed to be selected from the same row, as long as there remains at most one per column. Figure~\ref{exFile} shows a file placement on $B = (2,2,4,4,4,4)$.

\bfi
\begin{tikzpicture}
\draw(-.5,1) node {$B=$};
\foreach \x in {0,.5}
   \draw (\x,0)--(\x,1);
\foreach \y in {0,.5,1}
	\draw (0,\y)--(3,\y);
\foreach \x in {1,1.5,2,2.5,3}
   \draw (\x,0)--(\x,2);
\foreach \y in {1.5,2}
	\draw (1,\y)--(3,\y);
\draw (.25, .75) node{$R$};
\draw (1.25, 1.75) node{$R$};
\draw (2.25, 1.75) node{$R$};
\draw (1.75, .75) node{$R$};
\draw (2.75, 1.75) node{$R$};
\end{tikzpicture}
\capt{\label{exFile} A file placement on $B=(2,2,4,4,4,4)$.}
\efi

Let $f_j$ be the number of cells of $F$ in row $j$ of $B$ and assume that there are $n$ rows in $B$. For a fixed $m\geq 1$ we can define the \emph{$m$-weight} of $F$ as follows, $wt_m(F) = \prod_{j=1}^n 1\da_{f_j,m}$. For example, the placement in Figure~\ref{exFile} has $f_1 = 0$, $f_2 = 2$, $f_3 = 0$, and $f_4 = 3$. If we pick $m=3$ then $wt_3(F) = 1\da_{0,3}\cdot1\da_{2,3}\cdot1\da_{0,3}\cdot1\da_{4,3} =[(1)]\cdot [(1)(-2)]\cdot[(1)]\cdot[(1)(-2)(-5)] = -20$.

If $\mathcal{F}_k$ denotes the set of all file placements of $k$ rooks on a board, B, we define the \emph{$k$th $m$-weighted file placement number} of the board to be $$f_{k,m}(B) = \sum_{F\in\mathcal{F}_k} wt_m(F).$$ Using this definition yields the following theorem:

\begin{thm}[\cite{blrs:mrp}]
\label{file factor}
If $B=(b_1,\ldots,b_n)$ is a Ferrers board, then
$$
\sum_{k=0}^nf_{k,m}(B)x\da_{n-k,m} = \prod_{i=1}^n(x+b_i-(i-1)m).
$$
\end{thm}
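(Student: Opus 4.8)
The plan is to prove the identity by the polynomial method of Goldman, Joichi, and White, adapted to carry the $m$-weight. Both sides are polynomials in $x$ of degree $n$ (the $k=0$ term on the left contributes the monic $x\da_{n,m}$, since the empty placement has weight $1\da_{0,m}=1$), so it suffices to verify the identity at the infinitely many values $x=mt$ for $t=0,1,2,\dots$ and then invoke the fact that two degree-$n$ polynomials agreeing at more than $n$ points coincide.

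To interpret the left side at $x=mt$, I would augment $B$ by appending a full $mt\times n$ rectangle directly below it, partitioned into $t$ additional levels. Define a \emph{mixed placement} to be a choice, in every column, of either one cell of $B$ (the top part) or one cell of the rectangle, subject to the condition that the chosen rectangle cells occupy pairwise distinct levels (an $m$-level rook placement in the bottom part), with no constraint among the top cells beyond one per column. Weight each mixed placement by $wt_m$ of its top part alone. Grouping mixed placements by the number $k$ of columns whose cell lies in $B$, the top parts range over $\mathcal{F}_k$ while the rectangle cells form an $m$-level rook placement in the remaining $n-k$ full columns; the latter can be completed in exactly $(mt)\da_{n-k,m}$ ways. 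Hence the total weight of all mixed placements equals $\sum_k f_{k,m}(B)\,(mt)\da_{n-k,m}$, which is the left-hand side evaluated at $x=mt$.

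Next I would evaluate the same total weight column by column, processing the columns in order $i=1,\dots,n$ of nondecreasing height, and show that each column contributes the factor $mt+b_i-(i-1)m$. Build a mixed placement one column at a time, assigning to a newly placed top cell in row $j$ the incremental weight $1-ms_j$, where $s_j$ is the number of cells already in row $j$; since $1\da_{s+1,m}/1\da_{s,m}=1-ms$, these increments multiply to $wt_m$ of the top part. Suppose columns $1,\dots,i-1$ have been placed, with $r$ of their cells in the rectangle and $i-1-r$ in $B$. The rectangle option for column $i$ offers the $m(t-r)$ cells in the $t-r$ unused levels, each of weight $1$, contributing $m(t-r)$ to the column sum. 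The top option contributes $\sum_{j=1}^{b_i}(1-ms_j)=b_i-m\sum_{j=1}^{b_i}s_j$; because every earlier column is no taller than column $i$, all $i-1-r$ earlier top cells lie in rows $1,\dots,b_i$, so $\sum_{j=1}^{b_i}s_j=i-1-r$. Adding the two options gives $m(t-r)+b_i-m(i-1-r)=mt+b_i-m(i-1)$, in which the history-dependent quantity $r$ cancels.

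The cancellation of $r$ is the crux and the step I expect to be the main obstacle, since the $m$-weight couples cells across a row and so the individual column choices genuinely depend on the placement built so far; what saves the argument is that the \emph{sum} of incremental weights over the choices for column $i$ is the constant $mt+b_i-m(i-1)$, independent of the earlier configuration. Granting this, a routine peeling argument — summing over the choice for column $n$, then $n-1$, and so on — shows the total weight of mixed placements equals $\prod_{i=1}^n\bigl(mt+b_i-(i-1)m\bigr)$, which is the right-hand side at $x=mt$. Matching this with the expression from the previous paragraph establishes the identity at every $x=mt$, and polynomiality finishes the proof.
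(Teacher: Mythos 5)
Your argument is correct, but note that this paper does not actually prove Theorem~\ref{file factor}; it is quoted from~\cite{blrs:mrp}, so there is no in-paper proof to compare against. What you give is a sound, self-contained proof by the Goldman--Joichi--White specialization technique: interpret the left side at $x=mt$ by appending a full $mt\times n$ rectangle of $t$ extra levels, weight a mixed placement by the $m$-weight of its top part, and count the same weighted sum two ways. The two points that carry the whole argument are exactly the ones you flag: (i) the telescoping $1\da_{s+1,m}=1\da_{s,m}\cdot(1-ms)$, which lets you distribute $wt_m$ as a product of per-cell increments, and (ii) the identity $\sum_{j=1}^{b_i}s_j=i-1-r$, which uses the Ferrers condition $b_1\leq\cdots\leq b_n$ to guarantee all earlier top cells sit in rows $1,\dots,b_i$; this is where the weak monotonicity of column heights is genuinely needed, and it is what makes the per-column sum $mt+b_i-(i-1)m$ independent of the history so the peeling over columns $n,n-1,\dots,1$ goes through. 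The other standard route (and the more algebraic one) is induction on the number of columns: deleting the last column gives $f_{k,m}(B)=f_{k,m}(B')+(b_n-(k-1)m)f_{k-1,m}(B')$, again because all $k-1$ rooks of a placement on $B'=(b_1,\dots,b_{n-1})$ lie in rows $1,\dots,b_n$, and then the recursion $x\da_{n-k,m}=(x-(n-1-k)m)\,x\da_{n-1-k,m}$ extracts the factor $(x+b_n-(n-1)m)$. Your version buys a uniform combinatorial model for the whole product at once; the inductive version avoids the evaluation-at-infinitely-many-points step. Either is acceptable.
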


\subsection{File Placements on Singleton Boards}

Continuing the previous convention, we use $F$ to denote a particular file placement and $\mathcal{F}$ to denote the set of all file placements on $B$. Notice that, while the set of file placements, $\mathcal{F}$, of a given board does not depend on the choice of $m$, the $m$-weighted file placement numbers certainly do. Also note that the right side in Theorem~\ref{file factor} is exactly the same as in Theorem~\ref{br factor} but the requirement that $B$ be a singleton board has been relaxed to only require that it is a Ferrers board.

Assume that file placement $F$ happens to also be an $m$-level rook placement on $B$. It follows that $wt_m(F) = 1$ since each $f_j$ will equal either $0$ or $1$ and $1\da_{0,m} = 1\da_{1,m} = 1$. From this, the following corollary is clear. I should note that it was known to the authors of~\cite{blrs:mrp}, although they do not seem to have included it in the final paper:

\begin{cor}[\cite{blrs:mrp}]
\label{singFile}
If $B=(b_1,\ldots,b_n)$ is a singleton board, then
$$
\sum_{k=0}^n f_{k,m}(B)x\da_{n-k,m} = \sum_{k=0}^n r_{k,m}(B)x\da_{n-k,m}.
$$

Further, if $F_k'$ is a file placement of $k$ rooks on $B$ that is not an $m$-level rook placement and $\mathcal{F'}_k$ is the set of all file placements of $k$ rooks that are not $m$-level rook placements, then $\sum_{\mathcal{F'}_k}wt_m(F'_k) = 0$.
\end{cor}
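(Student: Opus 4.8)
The plan is to leverage the two factorization theorems already in hand rather than to analyze individual placements. First I would observe that because $B$ is a singleton board, both Theorem~\ref{file factor} and Theorem~\ref{br factor} apply to it, and their right-hand sides are literally the \emph{same} product $\prod_{i=1}^n(x+b_i-(i-1)m)$. Equating the two left-hand sides to this common product immediately yields the first displayed identity
$$
\sum_{k=0}^n f_{k,m}(B)\,x\da_{n-k,m} = \sum_{k=0}^n r_{k,m}(B)\,x\da_{n-k,m},
$$
which is exactly the first assertion of the corollary.

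Next I would pass from this polynomial identity to an identity of coefficients. The key point is that the $m$-falling factorials $x\da_{n-k,m}$, for $k=0,1,\ldots,n$, are polynomials in $x$ of distinct degrees $n, n-1, \ldots, 0$; they are therefore linearly independent and form a basis for the space of polynomials of degree at most $n$. Equating the coefficients of the two sums in this basis forces $f_{k,m}(B)=r_{k,m}(B)$ for every $k$.

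The final step is to split the weighted file-placement number along the natural partition of $\mathcal{F}_k$ into the set of file placements of $k$ rooks that \emph{are} $m$-level rook placements and the complementary set $\mathcal{F'}_k$ of those that are not. As already noted in the excerpt, every $m$-level rook placement has $m$-weight equal to $1$, and there are exactly $r_{k,m}(B)$ of them, so
$$
f_{k,m}(B) = \sum_{F\in\mathcal{F}_k} wt_m(F) = r_{k,m}(B) + \sum_{F'_k\in\mathcal{F'}_k} wt_m(F'_k).
$$
Combining this with the coefficient identity $f_{k,m}(B)=r_{k,m}(B)$ gives $\sum_{\mathcal{F'}_k} wt_m(F'_k)=0$, as claimed.

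I do not expect a serious obstacle: the whole argument is a formal consequence of the two cited factorizations together with the linear independence of the falling factorials. The only point meriting a word of care is the coefficient-extraction step, where one must justify that equality of the two generating polynomials really does yield the termwise identity $f_{k,m}(B)=r_{k,m}(B)$; this is immediate from the distinct-degree, hence basis, property of the $m$-falling factorials. It is worth emphasizing that this proof is entirely non-constructive: it certifies that the weights cancel in the aggregate without exhibiting \emph{why} any particular collection of placements cancels, which is precisely the gap that the partition constructed in Section~\ref{sec:par} is designed to fill.
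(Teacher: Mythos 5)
Your proposal is correct and follows essentially the same route as the paper, which likewise deduces the polynomial identity from the coincidence of the factorizations in Theorems~\ref{br factor} and~\ref{file factor} and then uses the observation that $m$-level rook placements have $m$-weight $1$ to conclude the remaining weights cancel. Your explicit appeal to the linear independence of the $m$-falling factorials to justify the termwise identity $f_{k,m}(B)=r_{k,m}(B)$ is a detail the paper leaves implicit, but it is the same argument.
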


While this result is clearly a consequence of $\sum_{k=0}^n f_{k,m}(B)x\da_{n-k,m}$ and  $\sum_{k=0}^n r_{k,m}(B)x\da_{n-k,m}$ having the exact same factorization for a singleton board, it is not immediately clear why the $m$-weights of all the file placements that are not $m$-level rook placements should cancel out. The rest of this paper develops a method of partitioning these file placements so that the $m$-weights cancel out on each partition. This provides a constructive explanation for why they cancel out over the entire set.

\section{Partitions of File Placements}
\label{sec:par}

For a fixed $m\geq 2$, let $B$ be a singleton board with regard to the choice of $m$. Let $\mathcal{F}'$ denote the set of all file placements on $B$ that are not $m$-level rook placements. Partition $\mathcal{F}'$ as follows:

For $F_0 \in \mathcal{F}'$ there is at least one level containing multiple rooks, since otherwise $F_0$ would be an $m$-level rook placement. Out of the levels containing multiple rooks, identify the one containing the fewest rooks. If two or more such levels exist, choose the lowest such level, simply to have a single, well-defined, level. We will call this level $l$. The partition containing $F_0$ contains another given file placement if and only if both placements contain the same number of rooks, all the rooks outside of level $l$ are in the same cells in both placements, the leftmost rook in level $l$ is in the same cell in both placements, and the columns of the other rooks in level $l$ are the same in both placements. This is the same as allowing every rook but the leftmost in level $l$ to be placed in any cell in level $l$ as long as it remains in its original column. Call the partition containing $F_0$, $\mathcal{P}(F_0)$.

The choice to pick a level containing the fewest number of rooks greater than $1$ is not essential, it simply serves to reduce the size of the partition containing $F_0$. 

\begin{lem}
If $F_0$ is a file placement on $B$ that is not an $m$-level rook placement in which level $l$ contains $n$ rooks, then $\mathcal{P}(F_0)$ contains $m^{n-1}$ file placements.
\end{lem}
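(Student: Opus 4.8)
The plan is to show that the placements in $\mathcal{P}(F_0)$ are in bijection with the ways of choosing a row of level $l$ for each of the $n-1$ rooks in level $l$ other than the leftmost, and that each such rook has exactly $m$ admissible rows. First I would observe that the defining conditions of $\mathcal{P}(F_0)$ pin down everything except these rows: the total number of rooks is fixed, the cells outside level $l$ are fixed, the leftmost rook of level $l$ is fixed, and the remaining rooks of level $l$ keep their columns. Since a file placement imposes no constraint within a row and these $n-1$ rooks occupy distinct, fixed columns, distinct choices of rows produce distinct legal file placements, and every placement of $\mathcal{P}(F_0)$ arises this way. Hence $|\mathcal{P}(F_0)|$ is the product, over the $n-1$ non-leftmost rooks, of the number of cells their column contributes to level $l$.

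The heart of the argument is to show each of these $n-1$ columns meets level $l$ in all $m$ of its rows, so that the count is $m^{n-1}$. Here I would invoke the singleton hypothesis. Write $r$ for the multiple of $m$ such that level $l$ occupies rows $r+1,\ldots,r+m$, and call a column \emph{deficient at level $l$} if it meets level $l$ in between $1$ and $m-1$ cells, equivalently if $\lfloor b_i \rfloor_m = r$ and $b_i$ is not a multiple of $m$. I claim at most one column of $B$ is deficient at level $l$: if columns $i<j$ were both deficient, then $\lfloor b_i \rfloor_m = \lfloor b_j \rfloor_m = r$ with $b_i$ not a multiple of $m$, so the singleton condition forces $\lfloor b_i \rfloor_m < \lfloor b_{i+1} \rfloor_m \leq \lfloor b_j \rfloor_m = r$, a contradiction.

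Next I would locate this unique deficient column, if it is present. A deficient column has $b_i < r+m$, whereas a column that fully fills level $l$ has $b_i \geq r+m$; since the column heights are weakly increasing, the deficient column has strictly smaller index than every fully filling column. Consequently, among the $n$ rook-bearing columns of level $l$, the deficient one---if it carries a rook at all---must be the leftmost, and every other rook of level $l$ sits in a column that fills level $l$ in all $m$ cells. Each of the $n-1$ non-leftmost rooks therefore has exactly $m$ choices of row, and combining with the first paragraph gives $|\mathcal{P}(F_0)| = m^{n-1}$.

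The main obstacle is this middle step: extracting from the singleton hypothesis the fact that at most one rook-bearing column of level $l$ fails to fill it, together with the observation that the weak monotonicity of column heights forces such a column to be the leftmost. Everything else is bookkeeping. It is worth emphasizing that this is precisely where the singleton hypothesis, rather than the general Ferrers hypothesis, is essential: on a non-singleton board two distinct columns could partially fill the same level, a non-leftmost rook could then have fewer than $m$ available rows, and the clean count $m^{n-1}$ would fail.
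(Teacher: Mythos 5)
Your proof is correct and follows the same approach as the paper's: everything is pinned down except the row of each of the $n-1$ non-leftmost rooks in level $l$, and each such rook has $m$ independent choices. You are in fact more careful than the paper's one-line argument, since you explicitly derive from the singleton hypothesis that every non-leftmost rook-bearing column meets level $l$ in all $m$ cells --- a point the paper only spells out later, in the proof of its base-case lemma.
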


\begin{proof}
Level $l$ contains $n$ rooks, the leftmost of which must be fixed. The remaining $n-1$ rooks in level $l$ can be placed in any of the $m$ cells in its column. Since these choices are independent, there are $m^{n-1}$ ways to make them. All other rooks in the placement are fixed.
\end{proof}

Figure~\ref{parExample} shows the four file placements in the case where $m = 2$, $l=1$, and $n+1 = 3$.

\bfi
\begin{tikzpicture}
\foreach \x in {0} 
   \draw (\x,0)--(\x,.5);
\draw (.5,0)--(.5,1.5);
\foreach \x in {1,1.5,2,2.5,3,3.5}
   \draw (\x,0)--(\x,2);
\foreach \y in {0,.5}
	\draw (0,\y)--(3.5,\y);
\foreach \y in {1,1.5}
        \draw (.5,\y)--(3.5,\y);
\draw(1,2)--(3.5,2);
\draw[thick,dashed] (0,1)--(3.5,1);
\draw (.75, 1.25) node{$R$};
\draw (1.25, .75) node{$R$};
\draw (1.75, .25) node{$R$};
\draw (2.25, 1.75) node{$R$};
\draw (2.75, .25) node{$R$};
\draw (3.25, 1.25) node{$R$};
\end{tikzpicture}
\hs{10pt}
\begin{tikzpicture}
\foreach \x in {0} 
   \draw (\x,0)--(\x,.5);
\draw (.5,0)--(.5,1.5);
\foreach \x in {1,1.5,2,2.5,3,3.5}
   \draw (\x,0)--(\x,2);
\foreach \y in {0,.5}
	\draw (0,\y)--(3.5,\y);
\foreach \y in {1,1.5}
        \draw (.5,\y)--(3.5,\y);
\draw(1,2)--(3.5,2);
\draw[thick,dashed] (0,1)--(3.5,1);
\draw (.75, 1.25) node{$R$};
\draw (1.25, .75) node{$R$};
\draw (1.75, .75) node{$R$};
\draw (2.25, 1.75) node{$R$};
\draw (2.75, .25) node{$R$};
\draw (3.25, 1.25) node{$R$};
\end{tikzpicture}

\vspace{10pt}
\begin{tikzpicture}
\foreach \x in {0} 
   \draw (\x,0)--(\x,.5);
\draw (.5,0)--(.5,1.5);
\foreach \x in {1,1.5,2,2.5,3,3.5}
   \draw (\x,0)--(\x,2);
\foreach \y in {0,.5}
	\draw (0,\y)--(3.5,\y);
\foreach \y in {1,1.5}
        \draw (.5,\y)--(3.5,\y);
\draw(1,2)--(3.5,2);
\draw[thick,dashed] (0,1)--(3.5,1);
\draw (.75, 1.25) node{$R$};
\draw (1.25, .75) node{$R$};
\draw (1.75, .25) node{$R$};
\draw (2.25, 1.75) node{$R$};
\draw (2.75, .75) node{$R$};
\draw (3.25, 1.25) node{$R$};
\end{tikzpicture}
\hs{10pt}
\begin{tikzpicture}
\foreach \x in {0} 
   \draw (\x,0)--(\x,.5);
\draw (.5,0)--(.5,1.5);
\foreach \x in {1,1.5,2,2.5,3,3.5}
   \draw (\x,0)--(\x,2);
\foreach \y in {0,.5}
	\draw (0,\y)--(3.5,\y);
\foreach \y in {1,1.5}
        \draw (.5,\y)--(3.5,\y);
\draw(1,2)--(3.5,2);
\draw[thick,dashed] (0,1)--(3.5,1);
\draw (.75, 1.25) node{$R$};
\draw (1.25, .75) node{$R$};
\draw (1.75, .75) node{$R$};
\draw (2.25, 1.75) node{$R$};
\draw (2.75, .75) node{$R$};
\draw (3.25, 1.25) node{$R$};
\end{tikzpicture}
\capt{\label{parExample} The four file placements together in their specific partition for $m=2$ and $B=(1,3,4,4,4,4,4)$. The dashed line separates the two different levels.}
\efi

The following lemma considers what occurs when $l$ contains exactly two rooks. In addition to providing the base case for the inductive argument of our main theorem, it is illuminating in its own right to consider this case.

\begin{lem}
\label{base case}
Let $F_0$ be a file placement on singleton board, $B$, in which the level, $l$, defined as above contains exactly two rooks. Let $\mathcal{P}(F_0)$ denote the set obtained by allowing the rightmost rook in $l$ to take any position in its original column that is still in level $l$. Then: $$\sum_{F\in\mathcal{P}(F_0)} wt_m(F)=0.$$
\end{lem}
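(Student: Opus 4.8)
The plan is to split $wt_m(F)$ into a factor that is constant on all of $\mathcal{P}(F_0)$ and a factor that varies as the free rook moves, and then to expose a sign cancellation in the varying factor. The two rooks of level $l$ are the fixed leftmost rook and the free rightmost rook, and the first thing I would pin down is that the free rook has exactly $m$ legal positions. Let $c_1$ be the leftmost column of $B$ that meets level $l$. The fixed rook sits in a column of index at least $c_1$, so the free rook, lying strictly to its right, sits in a column of index strictly greater than $c_1$; in particular its column is not $c_1$. Because $B$ is a singleton board, $c_1$ is the only column that can meet level $l$ in fewer than $m$ cells, so the free rook's column meets level $l$ in all $m$ of its cells, and the free rook may be slid to each of the $m$ rows of level $l$ while staying in its own column.

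With that established, I would write $wt_m(F) = W\cdot v(F)$, where $W = \prod 1\da_{f_j,m}$ collects the row factors coming from the rows outside level $l$, and $v(F)$ collects the row factors coming from the $m$ rows of level $l$. Every rook outside level $l$ is frozen across $\mathcal{P}(F_0)$, so $W$ is a single constant and all variation lives in $v(F)$. Since level $l$ contains only the two rooks, at most two of its $m$ rows are occupied; let $a$ be the fixed row of the leftmost rook and let $b$ denote the row of the free rook.

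The computation then splits into $m$ cases according to $b$. When $b \neq a$, which occurs for $m-1$ choices of $b$, two rows each carry a single rook, so $v(F) = 1\da_{1,m}\cdot 1\da_{1,m} = 1$; when $b = a$, the one remaining choice, a single row carries both rooks, so $v(F) = 1\da_{2,m} = 1-m$. Summing over the $m$ placements,
$$
\sum_{F\in\mathcal{P}(F_0)} wt_m(F) = W\bigl[(1-m) + (m-1)\cdot 1\bigr] = W\cdot 0 = 0.
$$

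The main obstacle is the opening step: confirming that the free rook genuinely has all $m$ positions available, equivalently that its column is full in level $l$. This is exactly where the singleton hypothesis is indispensable, since on a general Ferrers board two or more columns could meet level $l$ partially, the free rook could be confined to fewer than $m$ rows or barred from row $a$, and the decisive identity $(1-m)+(m-1)=0$ would break down. Once the full-column claim is secured, the remainder is the short two-case evaluation above, and this argument will also serve as the base case for the inductive proof of the general theorem.
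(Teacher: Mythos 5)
Your proof is correct and matches the paper's argument essentially step for step: both use the singleton hypothesis to show the free rook's column meets level $l$ in all $m$ cells, factor the weight as a constant $W$ from rows outside level $l$ times a varying level-$l$ factor, and cancel via $(m-1)\cdot 1 + 1\cdot(1-m) = 0$. No gaps to report.
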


\begin{proof}
Since $B$ is a singleton board, we know that there is at most one column of $B$ that intersects level $l$ non-trivially but in fewer than $m$ cells. Furthermore, if such a column exists, it must be the leftmost column to intersect level $l$ non-trivially, because the column heights are weakly increasing. Since there is a rook to the left of the rook we are moving, the rook we are moving must be in a column that intersects level $l$ in a full $m$ cells.

Since there are exactly two rooks in level $l$, there will be one case in which the rook we are moving occupies the same row as another rook, and $m-1$ cases in which the rook we are moving is alone in its row. Any other rows in level $l$ must be empty, therefore the weight of those specific rows will be $1\da_{0,m}=1$, if they exist. In the $m-1$ cases where the two rooks are in different rows, each row containing a rook contributes $1\da_{1,m}=1$ to the product determining the overall weight of the board. In the unique case where the two rooks are in the same row, that row contributes $1\da_{2,m}=1\cdot(1-m) = 1-m$ to the overall weight.

For $F\in\mathcal{P}(F_0)$ let $\hat{wt}_m(l^C)$ be the product of the row weights for the rows not in level $l$. Notice that since we are only moving rooks in level $l$, for any board in $\mathcal{P}(F_0)$, $\hat{wt}_m(l^C)$ will be some constant, $W$, for any placement in $\mathcal{F_0}$. Therefore, when the two rooks in $l$ are in different rows, $wt_m(F) = W\cdot 1\cdot 1=W$ and when the two rooks in $l$ are in the same row $wt_m(F) = W\cdot (1-m)$, since level $l$ contributes $(1-m)$ to the overall product.

Considering that $\mathcal{P}(F_0)$ contains $m-1$ file placements where the two rooks are in different rows and a unique file placement where the two rooks are in the same row, we obtain:

$$\sum_{F\in\mathcal{P}(F_0)}wt_m(F) = (m-1)\cdot W + 1\cdot W\cdot(1-m) = W\left (m-1+1-m\right ) = 0.$$
\end{proof}

It would be nice if this argument generalized directly to file placements where the level containing multiple rooks with the fewest rooks has more than two rooks. Specifically, if we could pick one of the rooks in that row and allow that rook to move to any cell in its original level and column, then sum the $m$-weights of all the file placements we obtained and get $0$, that would be nice and simple. We could conclude that we can always group weighted file placements that are not $m$-level rook placements into partitions of size $m$ where the weight summed over the entire partition equals to zero. Unfortunately, as the following example illustrates, this is not the case.

Consider the file placement in the top left of Figure~\ref{parExample} and look at the rooks in the first level. If you pick either of the two rooks to the right and allow that specific rook to be in any cell in its original column and level, no matter where you put the rook it will be in a row with one other rook. Since $m=2$, each of those choices will contribute $1\da_{2,2}=-1$ to the overall $m$-weight. Thus, summing over all possible locations for the given rook, we get $-2W$ which certainly will not be $0$ when $m\neq 1$, since $m=1$ is the only case where $1\da_{k,m}=0$ and therefore $W=0$ is possible.

On the other hand, consider the leftmost rook in the bottom level of the file placement. Left in its current position that level contributes $1\da_{2,2}=-1$ to the overall product. However, if it moves to the only other cell in its current column and level, it will be in a row of three rooks, which contributes $1\da_{3,2}=1\cdot-1\cdot-3=3$ to the overall weight. Therefore, summing over all possible locations for the rightmost rook, in this specific example, yields $-1W+3W=2W$. Clearly we cannot guarantee partitions whose $m$-weight sums to zero if we restrict ourselves to moving a single rook within a fixed column in a level which contains two or more rooks. This illustrates the need to define $\mathcal{P}(F_0)$ as was done at the start of this section. Recall that, in general, $\mathcal{P}(F_0)$ is defined to be the set of file placements where every rook in level $l$ other than the leftmost one is allowed to move to any cell in its original column and level.

\begin{thm}
\label{partitions}
Let $F_0$ be a file placement that is not an $m$-level rook placement and $\mathcal{P}(F_0)$ the partition that contains $F_0$, then $$\sum_{F\in\mathcal{P}(F_0)}wt_m(F) = 0.$$
\end{thm}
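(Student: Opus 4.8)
The plan is to peel off the rows outside level $l$, which contribute a common constant to every weight in $\mathcal{P}(F_0)$, and then prove that the level-$l$ contributions alone sum to zero by induction on the number $n$ of rooks in $l$, with the base case $n=2$ being precisely Lemma~\ref{base case}. First I would write $wt_m(F) = W\cdot \prod_{i=1}^m 1\da_{a_i,m}$ for every $F\in\mathcal{P}(F_0)$, where $a_1,\ldots,a_m$ are the occupancy numbers of the $m$ rows of level $l$ and $W=\hat{wt}_m(l^C)$ is the fixed product of the row weights outside $l$. It then suffices to show $\sum_F \prod_{i=1}^m 1\da_{a_i,m}=0$.

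Before running the induction I would invoke the singleton hypothesis to guarantee that every rook in $l$ except the leftmost one lies in a column meeting $l$ in all $m$ cells. Since $B$ is singleton, at most one column meets $l$ non-trivially in fewer than $m$ cells, and weak increase of the column heights forces it to be the leftmost such column; hence the only rook that could occupy a partial column is the fixed leftmost one, and each moving rook genuinely has all $m$ rows of $l$ available to it. This is where the board structure is essential, and it is the part I expect to require the most care to state cleanly.

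For the inductive step, let $R$ denote the rightmost moving rook and condition on the positions of the remaining $n-1$ rooks in $l$; write $(a_1,\ldots,a_m)$ for their row occupancies, so $\sum_i a_i = n-1$. Placing $R$ in row $i$ multiplies the level-$l$ weight of the other rooks by $1\da_{a_i+1,m}/1\da_{a_i,m} = 1-a_i m$ (no division problems arise, since $1\da_{k,m}\neq 0$ for $m\geq 2$). Summing over the $m$ rows available to $R$ yields the single scalar
$$\sum_{i=1}^m (1-a_i m) = m - m\sum_{i=1}^m a_i = m(2-n),$$
which is independent of the configuration of the other rooks. Consequently
$$\sum_{F\in\mathcal{P}(F_0)} wt_m(F) = m(2-n)\sum_{F'\in\mathcal{P}(F_0\setminus R)} wt_m(F'),$$
where $F_0\setminus R$ is the placement with $R$ deleted, which has $n-1$ rooks in $l$ and whose moving rooks still occupy full columns. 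When $n=2$ the prefactor $m(2-n)$ already vanishes, recovering Lemma~\ref{base case}; when $n\geq 3$ the right-hand sum is the corresponding sum for an $(n-1)$-rook instance and vanishes by the inductive hypothesis. Either way the total is zero.

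The main obstacle will not be the algebra but framing the induction so that the hypothesis genuinely applies to $F_0\setminus R$: one should induct on the slightly more general assertion that the weights sum to zero for \emph{any} level carrying at least two rooks whose non-leftmost rooks all occupy full columns, rather than only for the canonically chosen level $l$, since deleting $R$ may change which level is ``smallest.'' With that generalization the full-column property is preserved under deletion and the recursion closes. As a quick check of the computation, I note that the exponential generating function for a single row's weight is $(1+mx)^{1/m}$, so the product over the $m-1$ free rows times the derivative factor $(1+mx)^{1/m-1}$ coming from the row of the fixed leftmost rook telescopes to $(1+mx)^0=1$, whose higher coefficients all vanish; this is the closed-form shadow of the recursion above.
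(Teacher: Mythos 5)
Your proposal is correct and follows essentially the same route as the paper: factor out the constant contribution $W$ of the rows outside level $l$, delete the rightmost rook in $l$, observe that summing its $m$ reinsertions multiplies the reduced weight by $m(2-n)$, and close the induction with Lemma~\ref{base case} as the base case. Your extra worry about the canonical level changing after deletion is in fact unnecessary (level $l$ drops to $n-1$ rooks and so remains the unique minimal multi-rook level), but your proposed generalization is a harmless and valid way to make the recursion airtight.
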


\begin{proof}
Let level $l$ be the level containing a minimal number of rooks of those containing at least two rooks, as identified above. We proceed by induction on the number of rooks in level $l$. If $l$ contains exactly two rooks, we know $\sum_{F\in\mathcal{P}(F_0)}wt_m(F) = 0$ by Lemma~\ref{base case}.

Therefore, let us assume that level $l$ contains $n>2$ rooks. Since all the rooks outside of level $l$ remain unmoved across all file placements in $\mathcal{P}(F_0)$, we will denote their contribution to the $m$-weight as $W=wt_m(l^C)$ as was done in the proof of Lemma~\ref{base case}. Consider removing the rightmost rook in level $l$ to obtain a file placement with one fewer rook which we will denote as $\hat{F}$. The $m$-weight of this new placement is $wt_m(\hat{F}) = wt_m(l^C)\cdot 1\da_{n_1,m}\cdot1\da_{n_2,m}\cdot\ldots\cdot1\da_{n_m,m}$ where $n_i$ is the number of rooks in row $i$ of level $l$ once the rightmost rook in level $l$ has been removed.

Reintroducing the rook we removed back into the first row of level $l$, and the same column we removed it from, will change the $m$-weight to be $$wt_m(l^C)\cdot1\da_{n_1+1,m}\cdot1\da_{n_2,m}\cdot\ldots\cdot1\da_{n_m,m} = wt_m(\hat{F})\cdot(1-n_1m),$$ since $1\da_{n_1+1,m} = 1\da_{n_1,m}\cdot(1-n_1m)$. Similarly, if we put the rightmost rook in level $l$ back into the second row of level $l$, the $m$-weight of the resulting placement will be $$wt_m(l^C)\cdot1\da_{n_1,m}\cdot1\da_{n_2+1,m}\cdot\ldots\cdot1\da_{n_m,m} = wt_m(\hat{F})\cdot(1-n_2m).$$ Therefore, if we sum across all $m$ possible places to put the rook back, we get \begin{eqnarray*}
wt_m(\hat{F})\cdot\left ((1-n_1m)+(1-n_2m)+\ldots+(1-n_mm)\right )&=&wt_m(\hat{F})\cdot(m-(n_1+n_2+\cdots+n_m)m)\\
&=&wt_m(\hat{F})\cdot(m-(n-1)m)\\
&=&wt_m(\hat{F})\cdot(2m-nm)\\
&=&wt_m(\hat{F})\cdot m(2-n),
\end{eqnarray*}
because the sum of the number of rooks left in each row of the level after removing the rightmost rook will be the total number of rooks left in the level, which is $n-1$.

In addition to verifying our base case, that the $m$-weight will be $0$ if we allow $n=2$, this tells us that if we sum over all possible positions of the rightmost rook in level $l$, preserving which column it is in, then what we get will be a constant multiple of the $m$-weight of the placement where we simply remove the rightmost rook from level $l$. The constant will depend on the fixed values of $m$ and $n$, but it will be a constant over the partition $\mathcal{P}(F_0)$.

To complete the proof that $\sum_{F\in\mathcal{P}(F_0)}wt_m(F) = 0$, once again consider removing the rightmost rook in the specified level. If we let $\hat{F_0}$ denote the specific placement generated by removing the rightmost rook in the specified level of $F_0$, we get:
$$\sum_{F\in\mathcal{P}(F_0)}wt_m(F) = \sum_{\hat{F}\in\mathcal{P}(\hat{F_0})}\sum_{i=1}^m wt_m(\hat{F})\cdot (1-n_im)=m(2-n)\sum_{\hat{F}\in\mathcal{P}(\hat{F_0})}wt_m(\hat{F}) $$
By inductive hypothesis, $\sum_{\hat{F}\in\mathcal{P}(\hat{F_0})}wt_m(\hat{F}) = 0$. Therefore the entire sum $\sum_{F\in\mathcal{P}(F_0)}wt_m(F) = 0$ as desired.

\end{proof}

\section{Open Questions}
\label{sec:open}

While this provides a more illuminating explanation of why the sum of all $m$-weighted file placements that are not $m$-level rook placements on a singleton board must come out to zero, it leaves three major questions open that I can see. The first is the question of enumerating how many Ferrers boards have a given set of level numbers mentioned at the end of Section~\ref{sec:rp}. The second iswhether we can reduced the partition sizes. Finally, the third is whether a more combinatorial interpretation of this result is possible. Since we have already discussed the first question, let us move on to the second.

We know that the size of $\mathcal{P}(F)$ will be $m^{n-1}$ where $n$ is the least number of rooks in any level such that $F$ includes multiple rooks in that level. However, for large boards, $n$ could be quite large. So, the question is whether we can find a different partition of all the file placements such that the $m$-weight will be zero on each set in the partition, but the largest set in the partition is smaller than $m^{n-1}$?

When constructing $\mathcal{P}(F)$, we held the position of rooks outside of a single, specified, level constant. While this simplified the calculation of the $m$-weights, perhaps the partition sizes could be reduced by including placements where rooks change positions in multiple levels. However, there is always the consideration of an $m\times n$ board with a large $n$. Since that board only contains one level, any more efficient partitioning of the relevant file placements would almost necessarily be a refinement of the partitions presented in this paper. Alternatively, there is also the option of changing which column a given rook is in, but since the column doesn't affect the $m$-weight, this doesn't seem to increase the possibilities at first glance.

The final question is fairly straightforward. In enumerative combinatorics it is always nice when the numbers you are working with count something. Since weighted file placements can be negative, that presents an obstacle. However, since the objective is to show that a sum is zero, it is theoretically possible that the positively weighted file placements and the negatively weighted file placements are counting the same thing, and therefore must sum to zero together. A result of this nature would be interesting.


\end{document}